\newcommand{\fgs}{{\mbox{\rm f.$G$-s.}}}
\def\1{\underline{1}}
\def\P{\mathbb P}
\def\Z{{\mathbb Z}}
\def\Q{{\mathbb Q}}
\def\C{{\mathbb C}}
\def\DD{{\cal D}}
\def\gF{{\mathfrak F}}
\def\gH{{\mathfrak H}}
\newtheorem*{theorem*}{Theorem}
\newtheorem{lemma}{Lemma}
\newtheorem{proposition}{Proposition}
\newenvironment{definition}
{\smallskip\noindent{\bf Definition\/}:}{\smallskip\par}
\newenvironment{example}
{\smallskip\noindent{\bf Example\/}.}{\smallskip\par}
\newenvironment{examples}
{\smallskip\noindent{\bf Examples\/}.}{\smallskip\par}
\newenvironment{remark}
{\smallskip\noindent{\bf Remark\/}.}{\smallskip\par}
\title{On an equivariant version of the zeta function of a transformation
\footnote{Math. Subject Class.:  32S05, 32S50, 57R91, 58K10}
}
\author{S.M.~Gusein-Zade \thanks{Partially supported by
the Russian government grant 11.G34.31.0005,
RFBR--13-01-00755,
NSh--4850.2012.1 and Simons-IUM fellowship.
Address: Moscow State University, Faculty
of Mathematics and Mechanics, GSP-1, Moscow, 119991, Russia. E-mail:
sabir\symbol{'100}mccme.ru} \and I.~Luengo \thanks{The last two authors are partially
supported by the grant MTM2010-21740-C02-01. Address: University
Complutense de Madrid, Dept. of Algebra, Madrid, 28040, Spain.
E-mail: iluengo\symbol{'100}mat.ucm.es} \and
A.~Melle--Hern\'andez \thanks{Address: 
ICMAT (CSIC-UAM-UC3M-UCM). Dept.\ of Algebra, 
Facultad de Ciencias Matem\'aticas, Universidad Complutense de Madrid, 
28040, Madrid, Spain.
E-mail: amelle\symbol{'100}mat.ucm.es}}
\date{}
\begin{document}
\def\eps{\varepsilon}

\maketitle

\begin{abstract}
Earlier the authors offered an equivariant version of the classical monodromy zeta function of a
$G$-invariant function germ with a finite group $G$ as a power series with the coefficients from
the Burnside ring $K_0(\fgs)$ of the group $G$ tensored by
the field of rational numbers.
One of the main ingredients of the definition was the definition of the equivariant Lefschetz number
of a $G$-equivariant transformation given by W.~L\"uck and J.~Rosenberg.
Here we offer another approach to a definition of the equivariant Lefschetz number of a transformation and
describe the corresponding notions of the equivariant zeta function. This zeta-function
is a power series with the coefficients from the ring $K_0(\fgs)$.
We give an A'Campo type formula for the equivariant monodromy zeta function
of a function germ in terms of a resolution.
Finally we discuss orbifold versions of the Lefschetz number and of the monodromy zeta function
corresponding to the two equivariant ones.
\end{abstract}

Many topological invariants have equivariant versions for spaces with actions of a group $G$,
say, a finite one. For example, in \cite{Verdier}, an {equivariant} version of the Euler
characteristic is an element of the Grothendieck ring of $\Z[G]$- or $\Q[G]$-modules.
In \cite[Section~5.4]{TtD} it is defined as an element of the Burnside ring of the group $G$
(that is of the Grothendieck ring $K_0(\fgs)$ of finite $G$-sets). Applying these concepts to
the Milnor fibre, one gets an equivariant version of the Milnor number of a $G$-invariant
function-germ. For example, in \cite{Wall} it is an element of the ring of virtual
representations of the group $G$.

An important invariant of a germ of a holomorphic function (on $(\C^n,0)$ or on a germ of a
complex analytic variety) is its monodromy zeta function. It is defined as the zeta function
of the classical monodromy transformation on the Milnor fibre. The monodromy zeta function
is connected with a number of other invariants, topological and analytic ones. For example,
in \cite{CDG1}, it was shown that, for an irreducible plane curve singularity, the monodromy
zeta function of the corresponding function-germ coincides with the Poincar\'e series of
the natural fitration on the local ring defined by the 
curve valuation. There are
generalizations of this fact to some other situations (see, e.g., a survey in \cite{Euler}).
In all these cases one has no intristic explanation of the relation. The relation is
obtained by independent computation of the right and left hand sides of it in the same terms
and comparison of the obtained results.

Generalizations of relations of this sort to equivariant settings could help to understand
the general framework. This leads to the desire to define equivariant analogues of monodromy
zeta functions and of the Poincar\'e series of filtrations. These problem is not trivial
and equivariant analogues are not unique. For example, in \cite{CDG2, CDG3}, there were offered
different approaches to equivariant Poincar\'e series. In \cite{GLM1}, there was given
an equivariant version of the monodromy zeta function as a power series with the coefficients
from $K_0(\fgs)\otimes\Q$. The fact that it was defined only after tensoring by the field $\Q$
of rational numbers makes it less reasonable, in particular, to compare it with the
equivariant versions of the Poincar\'e series which were defined over integers.

One of the main ingredients of the definition of the equivariant version of the
monodromy zeta function in \cite{GLM1} was the definition of the equivariant Lefschetz
number of a transformation from \cite{Luck}. The definition from \cite{Luck} is rather natural.
Moreover, one can say that it is the only possible definition possessing some reasonable
properties. However the fact that it leads to a ``non-integer'' definition of the (monodromy)
zeta function gives a hint that this definition is not absolutely adequate to this purpose.

There is certain freedom in a definition of an equivariant 
version of the Lefschetz number of a transformation connected with the question whether
it should count the fixed points of the transformation or the fixed $G$-orbits of it.
Here we discuss the second approach, describe the corresponding 
equivariant versions of the Lefschetz number
and of the zeta function of a transformation. This zeta-function
is a power series with the coefficients from the ring $K_0(\fgs)$.

We give an A'Campo type formula for the equivariant monodromy zeta function
of a function germ in terms of a resolution. We also discuss possible orbifold versions
of the zeta function of a transformation.

In the last section  we discuss orbifold 
versions of the Lefschetz number and of the monodromy zeta function
corresponding to the two equivariant ones.

\section{Burnside ring and the equivariant Euler characteristic}\label{uno}
A {finite $G$-set} is a finite set with an action (say a left one) of the group $G$.
Isomorphism classes of irreducible $G$-sets (i.e. those which consist of exactly one orbit)
are in one-to-one correspondence with the set $\mbox{\rm consub}(G)$ of conjugacy classes
of subgroups of $G$. The Grothendieck ring $K_0(\fgs)$ of finite $G$-sets
(also called the Burnside ring of $G$) is the group generated by isomorphism classes
of finite $G$-sets with the relation $[A\coprod B]=[A]+[B]$ and with the multiplication
defined by the cartesian product.
As an abelian group
$K_0(\fgs)$  is freely generated by isomorphism classes $[G/H]$ of irreducible $G$-sets.

The Grothendieck ring $K_0(\fgs)$ has a natural pre-$\lambda$-ring structure defined by the series
$$
\sigma_X(t)=1+[X]\,t+[S^2X]\,t^2+[S^3X]\,t^3+\ldots\,,
$$
where $S^kX=X^k/S_k$ is the $k$-th symmetric power of the $G$-set $X$ with the natural $G$-action.
This pre-$\lambda$-ring structure induces a power structure over the Grothendieck ring $K_0(\fgs)$: see
\cite{Michigan}. This means that for a power series $A(t)\in 1+t\cdot K_0(\fgs)[[t]]$ and $m\in K_0(\fgs)$
there is defined a  series $\left(A(t)\right)^m \in 1+t\cdot K_0(\fgs)[[t]]$ so that
all the properties of the exponential function hold. In these notations $\sigma_X(t)=(1-t)^{-[X]}$.
The geometric description of the natural power stracture over the Grothendieck ring of quasiprojective
varieties given in \cite{Michigan} is valid for the power structure over $K_0(\fgs)$ as well.

Some examples of computation of the series $(1-t)^{-[G/H]}$ for 
$G$ being the cyclic group $\Z_6$ of order $6$ 
and the group ${\mathcal S}_3$ of permutations on three elements
can be found in \cite{GLM1} (with some misprints).
For example
\begin{eqnarray*}
(1-t)^{-[{\mathcal S}_3/\langle e\rangle]}&=&\frac{1}{1-t^6}[1]+\frac{t^3}{(1-t^3)(1-t^6)}[{\mathcal S}_3/\Z_3]\\
&+&\frac{3t^2}{(1-t^2)^2(1-t^6)}[{\mathcal S}_3/\Z_2]\\
&+&\frac{t(1+4t^2+t^3+4t^4-2t^5+3t^6+t^7)}{(1-t^2)^2(1-t^3)(1-t^6)(1-t)^2}[{\mathcal S}_3/\langle e\rangle]\,.
\end{eqnarray*}


There is a natural homomorphism from the Grothendieck ring $K_0(\fgs)$ to the ring $R(G)$
of virtual representations of the group $G$ which sends the class $[G/H]\in K_0(\fgs)$ 
to the representation $i^G_H[1_H]$ induced from the trivial one-dimensional representation $1_H$
of the subgroup $H$.
This homomorphism is a homomorphism of pre-$\lambda$-rings (\cite{knutson}).

In some places, say, in \cite{TtD, Luck, GLM1}, the  equivariant Euler
characteristic of a $G$-space is considered as an element of the Grothendieck ring $K_0(\fgs)$.
For a relatively good $G$-space $X$ (say, for a quasiprojective variety) the
equivariant Euler characteristic $\chi^G(X)\in K_0(\fgs)$ can be defined in the following way.
For a point $x\in X$, let $G_x=\{g\in G:\, g\cdot x=x \}$ be the isotropy subgroup of the point $x$.
For a conjugacy class $\gH\in \mbox{\rm consub}(G)$, let
$X^{\gH}=\{x\in X: x \mbox{ is a fixed point of a subgroup }H\in \gH\}$ and let
$X^{(\gH)}=\{x\in X: G_x\in \gH\}$ be the set of points with the isotropy subgroups from $\gH$.
(One can see that in the natural sense $X^{(\gH)}=X^{\gH}\setminus X^{>\gH}$, where
$X^{>\gH}=\bigcup\limits_{\gH'>\gH}X^{\gH'}$.) Then
\begin{equation}\label{equiEuler}
\chi^G(X)=\sum_{\gH\in {\rm consub}(G)}\frac{\chi(X^{(\gH)})\,|H|}{|G|}[G/H]=
\sum_{\gH\in {\rm consub}(G)}\chi(X^{(\gH)}/G)[G/H],
\end{equation}
where $H$ is a representative of the conjugacy class $\gH$.

\begin{remark}
Here we use the additive Euler characteristic $\chi(\cdot)$, i.e. the alternating sum of
the ranks of the cohomology groups with compact support. For a complex analytic variety this
Euler characteristic is equal to the alternating sum of the
ranks of the usual cohomology groups.
\end{remark}

Let us show that for any subgroup $H$ of $G$ the series
$(1-t)^{-[G/H]}$ represents rational function with the denominator equal to a product of
the binomials of the form $(1-t^m)$.
Let 
\begin{equation}\label{power-G-set}
(1-t)^{-[G/H]}= \sum_{\gF \in {\rm consub}(G)}{\mathcal A}_ {H, \gF}(t)[G/F]
 \end{equation}
where $F$ is a representative of the conjugacy class $\gF$.

Let $\gF$ be a conjugacy class of subgroups of $G$ and let $F$ be a representative of it.
The subgroup $F$ acts on the $G$-space $G/H$. Let $F\backslash G/H$ be the 
quotient of $G/H$ by this action and let $p:G/H \to F\backslash G/H$
be the quotient map. For $m=1,2,\ldots,$ let $Y_m$ be the set of points of $F\backslash G/H$
with $m$ preimages in $G/H$ and let ${\ell}^{\gF}_m=|Y_m|$.
(The numbers ${\ell}^{\gF}_m$ depend only on the congugacy class $\gF$.)
For an abelian $G$, ${\ell}^{\gF}_m$ is different from zero if and only if 
$m=\frac{|F|}{|F\cap H|}$ and in this case ${\ell}^{\gF}_m=|G|/|F+H|$.

For conjugacy classes $\gF$ and $\gF'$ from ${\rm consub}(G)$, let $F$ and $F'$ be their representatives,
and let $r_{\gF',\gF}$ be the number of fixed points of the group $F$
on $G/F'$. The integer $r_{\gF',\gF}$ is different from zero if and only
if $\gF'\ge \gF$ (i.e. there exist representatives $F'$ of $F$ of them such that $ F' \supset F$.
For an abelian $G$ and for $\gF'\ge \gF$, one has $r_{\gF',\gF}=|G/F'|$. 
 
\begin{lemma} For $\gF\in {\rm consub}(G)$ one has
 \begin{equation}\label{triang}
\prod_{m \geq 1} (1-t^m)^{-{\ell}^{\gF}_m}
= \sum_{\gF' \in {\rm consub}(G)}r_{\gF',\gF}\,\, {\mathcal A}_ {H, \gF'}(t).
 \end{equation}
\end{lemma}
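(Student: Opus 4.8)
The plan is to recognize the right-hand side as the image of the series $(1-t)^{-[G/H]}$ under the ``mark'' (fixed-point) homomorphism attached to $F$, and then to compute that image directly from the symmetric-power description of $\sigma_{G/H}(t)$. For a subgroup $F\subset G$, consider the map $\phi_F\colon K_0(\fgs)\to\Z$ sending the class of a finite $G$-set $X$ to the number $|X^F|$ of its $F$-fixed points. This $\phi_F$ is a ring homomorphism (it is additive on disjoint unions, and $(X\times Y)^F=X^F\times Y^F$ gives multiplicativity), and by the very definition of $r_{\gF',\gF}$ one has $\phi_F([G/F'])=|(G/F')^F|=r_{\gF',\gF}$. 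Since $\{[G/F']\}$ is a free basis of $K_0(\fgs)$, applying $\phi_F$ to each $t^k$-coefficient of the expansion \eqref{power-G-set} turns $\sum_{\gF'}{\mathcal A}_{H,\gF'}(t)[G/F']$ into $\sum_{\gF'}r_{\gF',\gF}\,{\mathcal A}_{H,\gF'}(t)$, i.e.\ into exactly the right-hand side of \eqref{triang}.

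It then remains to compute the same image of the left-hand side. By the definition of the series $\sigma_{G/H}(t)$ one has $(1-t)^{-[G/H]}=\sigma_{G/H}(t)=\sum_{k\ge 0}[S^k(G/H)]\,t^k$, so applying $\phi_F$ coefficientwise produces $\sum_{k\ge0}|(S^k(G/H))^F|\,t^k$. Thus the whole statement reduces to the identity
$$
\sum_{k\ge0}\bigl|(S^k(G/H))^F\bigr|\,t^k=\prod_{m\ge1}(1-t^m)^{-{\ell}^{\gF}_m}.
$$

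To establish this I would count the $F$-fixed points of the symmetric power. A point of $S^k(G/H)$ is a multiset of $k$ elements of $G/H$, and it is fixed by $F$ precisely when its multiplicity function is constant on each $F$-orbit of $G/H$. Hence $F$-fixed multisets of total size $k$ correspond bijectively to assignments of a non-negative integer to every $F$-orbit, with the orbit cardinalities weighted by these integers summing to $k$. Now $F\backslash G/H$ is exactly the set of $F$-orbits on $G/H$, and the number of preimages of an orbit under $p$ is its cardinality; so the orbits of size $m$ are precisely the ${\ell}^{\gF}_m$ points of $Y_m$. The generating function for such weighted assignments factors over the orbits as $\prod_{O}(1-t^{|O|})^{-1}=\prod_{m\ge1}(1-t^m)^{-{\ell}^{\gF}_m}$, which is the desired left-hand side. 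Combining the two computations yields \eqref{triang}.

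The genuinely substantive step is the fixed-point count in the symmetric power, together with the elementary observation that $\phi_F$, being additive on $K_0(\fgs)$, may be applied coefficientwise to $\sigma_{G/H}(t)$; once the $F$-invariant multisets are organized orbit by orbit, the rest is bookkeeping. I would expect the only point needing real care to be the identification of the orbit cardinalities with the preimage counts defining ${\ell}^{\gF}_m$, so that the exponents in the infinite product match the combinatorics exactly.
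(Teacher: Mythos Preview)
Your proposal is correct and follows essentially the same approach as the paper: both apply the fixed-point count for $F$ coefficientwise to equation~\eqref{power-G-set}, identify $F$-fixed elements of $S^k(G/H)$ with $F$-invariant multiplicity functions on $G/H$ (equivalently, functions on $F\backslash G/H$), and factor the resulting generating series over the $F$-orbits according to their sizes. Your use of the mark homomorphism $\phi_F$ and multiset language is a slightly more polished packaging of exactly the same argument.
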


\begin{proof}
 Let $F$ be a representative of $\gF$ and let us count fixed points of the subgroup $F$
in the left hand side and right hand side of (\ref{power-G-set}). 

For a finite set $X$ an element of $\coprod_{k\geq 0}S^kX $ can be identified 
with an integer valued function on $X$ with non-negative values. The corresponding 
element belongs to $S^kX $ if and only if the sum of all the values of the function is equal
to $k$.
An element of $\coprod_{k\geq 0}S^k [G/H] $ is fixed with respect to $F$ if and only if the corresponding 
function is invariant with respect to the $F$-action on $G/H$. Such a function can be identified 
with a function on $F\backslash G/H$. A function on $F\backslash G/H$ can be also considered 
as the direct sum of functions on the subset $Y_s$ defined above.
The generating series for the number of functions on $Y_s$ (i.e. the series 
 $\sum_{k\geq 0}\arrowvert S^k Y_s \arrowvert\, t^k$) is 
$(1-t)^{-|Y_s|}=(1-t)^{-{\ell}^{\gF}_s}$.
Each function on $Y_s$ with the sum of values equal to $k$ lifts to an $H$ invariant
 function  on $G/H$ with the sum of the values equal to $k s$. 
Therefore the generating series for $F$-invariants functions on $p^{-1}(Y_s)$ is  
$(1-t^s)^{-{\ell}^{\gF}_s}$.
The generating series for all $F$-invariants functions on $G/H$ is the product 
of those for $p^{-1}(Y_s)$. This is the left hand side of $(\ref{triang})$.

The right hand side of $(\ref{triang})$ is obviously the set of fixed points of $F$ 
on the right hand side of $(\ref{power-G-set})$.
\end{proof}

Since $r_{\gF',\gF}$ is different from zero if and only if $\gF\le \gF'$ and
$r_{\gF',\gF'}$ is different from zero, the system 
of equations $(\ref{triang})$ is a triangular one (with respect to the partial
order on the set of conjugacy classes of subgroups of $G$). Toghether with the fact 
that the denominators of the left hand side of the equations $(\ref{triang})$
are product of the binomials ot the form $(1-t^m)$ this implies the following statement.

\begin{proposition}\label{polynomials}
 For any subgroup $H$ of $G$ the series $(1-t)^{-[G/H]}$ belongs to the localization 
$K_0(\fgs)[t]_{(\{1-t^m\})}$ of the polynomial ring $K_0(\fgs)[t]$ at all the elements
of the form $(1-t^m)$, $m\geq 1$.
\end{proposition}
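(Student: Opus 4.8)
The plan is to reduce the assertion to a claim about the individual coordinates ${\mathcal A}_{H,\gF}(t)$ of $(1-t)^{-[G/H]}$ in the free basis $\{[G/F]\}$, and then to combine two facts of a different nature: \emph{rationality} with the prescribed denominators, which comes out of the triangular system of the Lemma, and \emph{integrality} of the numerators, which comes from the power structure being defined over $\Z$.

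First I would record the reduction. Since $K_0(\fgs)$ is freely generated as an abelian group by the classes $[G/F]$, the expansion (\ref{power-G-set}) is unique, and membership $(1-t)^{-[G/H]}\in K_0(\fgs)[t]_{(\{1-t^m\})}$ is equivalent (take a common denominator) to showing that each coordinate ${\mathcal A}_{H,\gF}(t)$ lies in $\Z[t]_{(\{1-t^m\})}$, i.e.\ is a rational function of $t$ with an integer-coefficient numerator and a denominator that is a product of binomials $1-t^m$. I would also note at the outset that, by the defining property of the power structure, $(1-t)^{-[G/H]}$ belongs to $1+t\cdot K_0(\fgs)[[t]]$; consequently every coordinate is an honest integer power series, ${\mathcal A}_{H,\gF}(t)\in\Z[[t]]$. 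This last observation is invisible to the linear algebra but is exactly what will force integrality at the end.

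Next I would solve the system (\ref{triang}), regarded as a linear system whose unknowns are the series ${\mathcal A}_{H,\gF'}(t)$, indexed by $\gF'\in{\rm consub}(G)$, with constant integer coefficient matrix $(r_{\gF',\gF})$ and right-hand sides $L_{\gF}(t):=\prod_{m\geq1}(1-t^m)^{-{\ell}^{\gF}_m}$. By the remarks preceding the Proposition, $r_{\gF',\gF}\neq0$ forces $\gF\le\gF'$ and the diagonal entries (those with $\gF'=\gF$) are non-zero, so the matrix is triangular with non-zero diagonal for the partial order on ${\rm consub}(G)$; hence it is invertible over $\Q$, with constant rational entries in its inverse. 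Since each $L_{\gF}(t)$ is manifestly a rational function whose denominator is a product of binomials $1-t^m$, back-substitution expresses every ${\mathcal A}_{H,\gF}(t)$ as a $\Q$-linear combination of the $L_{\gF}(t)$; in particular each ${\mathcal A}_{H,\gF}(t)$ is a rational function of $t$ with denominator a product of binomials $1-t^m$, a priori with rational numerator. Finally I would upgrade this to integrality: writing ${\mathcal A}_{H,\gF}(t)=p(t)/q(t)$ with $q(t)=\prod_i(1-t^{m_i})$, the product $p(t)={\mathcal A}_{H,\gF}(t)\,q(t)$ lies in $\Z[[t]]$ (product of an integer power series and an integer polynomial) and is a polynomial, hence $p(t)\in\Z[t]$; thus ${\mathcal A}_{H,\gF}(t)\in\Z[t]_{(\{1-t^m\})}$, and summing over $\gF$ gives the Proposition.

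The genuine obstacle is the integrality, not the rationality. The triangular solve forces division by the diagonal entries $r_{\gF,\gF}$ (for abelian $G$ these are $|G/F|$, generally larger than $1$), so the linear algebra alone yields only rational numerators and cannot by itself explain the integer numerators one sees in examples such as the $\S_3$ computation above. The point is that rationality with the prescribed denominators is supplied by the Lemma, whereas integrality of the numerators is imposed externally by the fact that the power structure, and hence the series $(1-t)^{-[G/H]}$, is defined over $\Z$; the two inputs must be used together.
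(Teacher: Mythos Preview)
Your argument is correct and follows essentially the same route as the paper: use the triangular system (\ref{triang}) from the Lemma to express each ${\mathcal A}_{H,\gF}(t)$ as a rational function with denominator a product of binomials $(1-t^m)$. You are in fact more careful than the paper on the integrality point --- the paper simply invokes triangularity and the shape of the left-hand sides, without explicitly observing that the back-substitution a priori produces only $\Q$-coefficients (because of division by the diagonal entries $r_{\gF,\gF}$) and that one must then combine this with $(1-t)^{-[G/H]}\in 1+t\cdot K_0(\fgs)[[t]]$ to force integer numerators.
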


The natural homomorphism from the Grothendieck ring $K_0(\fgs)$ to the ring
$R(G)$ of virtual representations of the group $G$ sends the equivariant Euler
characteristic $\chi^G(X)$ to the one used in \cite{Wall}. Since this
homomorphism is, generally speaking, neither injective, no surjective, the
equivariant Euler characteristic as an element in $K_0(\fgs)$ is a somewhat
finer invariant than the one as an element of the ring $R(G)$.

\section{An alternative version of the equivariant Lefschetz number of a map}\label{EquiLn}
Let $X$ be a relatively good topological space (say, a quasiprojective complex or real variety)
with a $G$-action and let $\varphi:X\to X$ be a $G$-equivariant map. The usual (``non-equivariant'')
Lefschetz number $L(\varphi)$ counts the fixed points of $\varphi$ (or rather of its generic perturbation).
The equivariant version $L^G(\varphi)$ of the Lefschetz number from \cite{Luck} counts the
fixed points of $\varphi$ as a (finite) $G$-set. This leads to the following equation for the
equivariant Lefschetz number
\begin{equation}\label{oldL}
L^G(\varphi)=\sum\limits_{\gH\in \mbox{consub}(G)}
\frac{L(\varphi_{\vert (X^{\gH}, X^{>\gH})})\vert H\vert}{\vert G\vert} [G/H]\,,
\end{equation}
where $H$ is a representsative of the class $\gH$.
If $\varphi$ is a $G$-homeomorphism (like the monodromy transformation, see Section~\ref{Sec_acampo}), then
\begin{equation}\label{oldLhomeo}
L^G(\varphi)=\sum\limits_{\gH\in \mbox{consub}(G)}
\frac{L(\varphi_{\vert X^{(\gH)}})\vert H\vert}{\vert G\vert} [G/H]\,.
\end{equation}

Assume that one wants to count the fixed orbits of $\varphi$ (i.e. the orbits which are sent to
themselves, generally speaking, not pointwise) as finite $G$-sets. This leads to the following
definition of the equivariant Lefschetz number:
\begin{equation}\label{newL}
{\widetilde L}^G(\varphi)=\sum\limits_{\gH\in \mbox{consub}(G)}
L(\varphi_{\vert (X^{\gH}/G, X^{>\gH}/G)}) [G/H]\,.
\end{equation}
If $\varphi$ is a $G$-homeomorphism, one has
\begin{equation}\label{newLhomeo}
{\widetilde L}^G(\varphi)=\sum\limits_{\gH\in \mbox{consub}(G)}
L(\varphi_{\vert X^{(\gH)}/G}) [G/H]\,.
\end{equation}
(It is useful to compare Equations (\ref{oldL}), (\ref{oldLhomeo}) and (\ref{newL}),
(\ref{newLhomeo}) with the two parts of the equation (\ref{equiEuler}).)

\begin{example}
For some simplicity, let the group $G$ be abelian, let $X=(G/H)\times \Z_k=\{0,1, \ldots, k-1\}$, $k>0$,
with the natural action of the group $G$ on the first factor, and let the map
$\varphi:X\to X$ be defined by
$$
\varphi(a, i)= \begin{cases} (a, i+1)  & \mbox{ for } 0\le i< k-1\,,\\
(ga, 0)  & \mbox{ for } i=k-1\,. \end{cases}
$$
If $k>1$, then $L^G(\varphi)={\widetilde L}^G(\varphi)=0$ since $\varphi$ has neither fixed points,
no fixed orbits. The smallest $i$ for which ${\widetilde L}^G(\varphi^i)\ne 0$
is $i=k$. In this case all the $G$-orbits in $X$ are fixed by $\varphi^k$
and therefore ${\widetilde L}^G(\varphi^k)=k[G/H]$. On the other hand, if
$g\notin H$, the map $\varphi^k$ has no fixed points and thus
$L^G(\varphi^k)=0$. The smallest $i$ for which $L^G(\varphi^i)\ne 0$
is $i=\ell k$, where $\ell$ is the order of the element $g$ in the
group $G/H$. In this case all the points of $X$ are fixed by
$\varphi^{\ell k}$
and therefore ${\widetilde L}^G(\varphi^{\ell k})=k[G/H]$.
\end{example}

Just in the same way as in \cite{Luck} one can formulate the equivariant version
of the Lefschetz fixed point theorem for ${\widetilde L}^G(\varphi)$ (an analogue
of Theorem~2.1 in \cite{Luck}).

\section{The zeta function of a transformation}\label{Sec_zeta}
Let  $\varphi:X\to X$ be as above. The usual (non-equivariant) zeta function of $\varphi$
is defined in terms of the action of $\varphi$ in the (co)homology groups of $X$ (in a way somewhat
similar to the definition of the Lefschetz number).  This definition is not convinient for a direct generalizarion
to the equivariant case. It is more convinient to use the  definition of the zeta function
of the transformation $\varphi$ in terms of the   Lefschetz numbers of the iterates  of $\varphi$. One defines
integers  $s_i$, $i=1, 2\ldots,$ recursively by the equation
\begin{equation}\label{lefsc}
L(\varphi^m)=\sum_{i|m}s_i\,\, .
 \end{equation}

The number $s_m$ counts the points $x\in X$
with the $\varphi$-order equal to $m$ (i.e.
$\varphi^m(x)=x$,  $\varphi^{i}(x)\ne x$ for $0<i<m$). Together with each such point all its images
under the iterates of $\varphi$ (there  are exactly $m$ different ones) are of this sort. Therefore
$s_m$ is divisible by $m$. One defines the zeta function $\zeta_\varphi(t)$ to be
\begin{equation}\label{zetalef}
\zeta_\varphi(t):=\prod_{m\geq 1} (1-t^m)^{-{s_m}/{m}}.
\end{equation}

\begin{remark}
 There are two traditions to define the zeta function of a transformation. The other one does not contain
the minus sign in the exponent and therefore is the inverse to this one. Here we follow the definition from \cite{AC}.
\end{remark}

In the equivariant version, let $s_m^{{{G}}}(\varphi)$ and ${\widetilde s}_m^{{{G}}}(\varphi)$
be defined through $
L^G(\varphi^i)$ and $
{\widetilde L}^G(\varphi^i)$  respectively by the analogues of the equation  (\ref{lefsc})
\begin{equation}\label{G-lefsc}
L^G(\varphi^m)=\sum_{i|m}s_i^G(\varphi),\,\,\,  {\widetilde L}^G(\varphi^m)=\sum_{i|m}{\widetilde s}_i^G(\varphi).
 \end{equation}
The elements $s_m^G(\varphi)$ and ${\widetilde s}_m^G(\varphi)$ count the points in $X$ the $\varphi$-order
of which is equal to $m$ in
$X$ and in $X/G$ respectively.

\begin{example}
 In the Example from Section~\ref{EquiLn}
$${\widetilde s}_i^G(\varphi)=0\,\,\mbox{ for }\,\, i<k \,\,\mbox{ and  }\,\,{\widetilde s}_k^G(\varphi)=
{\widetilde L}^G(\varphi)=k [G/H],$$
$${s}_i^G(\varphi)=0\,\,\mbox{ for }\,\, i<\ell k \,\,\mbox{ and  }\,\,{ s}_{\ell k}^G(\varphi)=
{ L}^G(\varphi)=k [G/H].$$
\end{example}
One can see that in this case ${\widetilde s}_k^G(\varphi)$ is divisible by $k$,
but ${s}_{\ell k}^G(\varphi)$ is not divisible by $\ell k$.
This is a general feature. One can easily prove the following proposition.

\begin{proposition}
 The element ${\widetilde s}_m^G(\varphi)$ in $K_0(\fgs)$ is divisible by $m$.
\end{proposition}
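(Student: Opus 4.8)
The plan is to reduce the statement to the non-equivariant divisibility already recorded just after equation (\ref{lefsc}), by exploiting that $K_0(\fgs)$ is \emph{free} abelian on the classes $[G/H]$, $\gH\in{\rm consub}(G)$. Since divisibility by $m$ in a free abelian group is checked coordinate-by-coordinate, it suffices to expand $\widetilde L^G(\varphi^m)$ in this basis, observe that the defining recursion (\ref{G-lefsc}) for $\widetilde s_m^G$ then splits into one independent scalar recursion per basis element, and finally recognize each such scalar recursion as an instance of the non-equivariant one for a genuine self-map on an orbit space.

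Concretely, first I would write, using (\ref{newLhomeo}) in the homeomorphism case (or (\ref{newL}) with the relative pair in general),
\begin{equation*}
\widetilde L^G(\varphi^m)=\sum_{\gH\in{\rm consub}(G)}L\bigl(\varphi^m_{\vert X^{(\gH)}/G}\bigr)\,[G/H]\,.
\end{equation*}
Because $\varphi$ is $G$-equivariant it preserves the isotropy type of each point, hence each stratum $X^{(\gH)}$ is $\varphi$-invariant and $\varphi$ descends to a self-map $\overline\varphi_\gH$ of the (again reasonably good) orbit space $X^{(\gH)}/G$; the coefficient $L_\gH(\varphi^m):=L(\varphi^m_{\vert X^{(\gH)}/G})$ is then the ordinary integer Lefschetz number of the $m$-th iterate of $\overline\varphi_\gH$. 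Writing $\widetilde s_i^G(\varphi)=\sum_\gH s_{i,\gH}\,[G/H]$ with $s_{i,\gH}\in\Z$ and substituting into (\ref{G-lefsc}), freeness of the basis forces, for each $\gH$ separately, the identity $\sum_{i\mid m}s_{i,\gH}=L_\gH(\varphi^m)$. This is exactly the recursion (\ref{lefsc}) for the transformation $\overline\varphi_\gH$ on $X^{(\gH)}/G$, so each integer $s_{m,\gH}$ equals the non-equivariant $s_m$ of $\overline\varphi_\gH$.

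With this identification the argument following (\ref{lefsc}) applies verbatim to each coordinate: $s_{m,\gH}$ counts the points of $X^{(\gH)}/G$ whose $\overline\varphi_\gH$-order equals $m$, and together with each such point its $m$ distinct images under the iterates of $\overline\varphi_\gH$ are also of $\overline\varphi_\gH$-order $m$, so these points fall into $\overline\varphi_\gH$-cycles of length $m$; hence $m\mid s_{m,\gH}$. Since every coordinate of $\widetilde s_m^G(\varphi)=\sum_\gH s_{m,\gH}[G/H]$ is divisible by $m$, so is $\widetilde s_m^G(\varphi)$ in $K_0(\fgs)$. I expect the only point requiring care to be the verification that $X^{(\gH)}/G$ with $\overline\varphi_\gH$ is a legitimate input for the non-equivariant counting argument (so that the interpretation of $s_{m,\gH}$ as a count of order-$m$ points, in genuine length-$m$ cycles, is valid); this is precisely where the present ``orbit'' definition differs from the fixed-point one, and it is exactly the reason the analogous divisibility fails for $s_m^G(\varphi)$, as the Example illustrates.
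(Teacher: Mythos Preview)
The paper does not actually prove this proposition; it only says ``One can easily prove the following proposition.'' Your argument---expanding in the free basis $\{[G/H]\}$, observing that the recursion (\ref{G-lefsc}) decouples coordinatewise into the non-equivariant recursion (\ref{lefsc}) for the induced self-map on each orbit stratum, and then invoking the cycle-length argument following (\ref{lefsc})---is correct and is exactly the straightforward proof the authors presumably had in mind.

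One small imprecision: the sentence ``Because $\varphi$ is $G$-equivariant it preserves the isotropy type of each point'' is only true when $\varphi$ is a $G$-homeomorphism; for a general $G$-map one only has $G_{\varphi(x)}\supseteq G_x$, so $\varphi$ preserves the pairs $(X^{\gH},X^{>\gH})$ rather than the open strata $X^{(\gH)}$. You already flag this by citing (\ref{newL}) with the relative pair, and the argument goes through unchanged since the relative Lefschetz number $L(\varphi_{\vert(X^{\gH}/G,X^{>\gH}/G)})$ is a difference of two ordinary Lefschetz numbers, each satisfying the non-equivariant divisibility. It would be cleaner to phrase the whole proof in the relative version from the outset.
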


This permits to give the following definition:

\begin{definition}
The {equivariant zeta function} of a $G$-equivariant map $\varphi:X\to X$ is the series
${\widetilde \zeta}_\varphi^G(t)\in 1+t\cdot K_0(\fgs)[[t]]$ defined by
\begin{equation}\label{eq2}
{\widetilde \zeta}_\varphi^G(t)=\prod_{m\geq 1} (1-t^m)^{-{{\widetilde s}_m^{{{G}}}}/{m}},
\end{equation}
where the virtual finite $G$-sets ${\widetilde s}_m^G\in K_0(\fgs)$ are defined as above.
\end{definition}

\begin{remark}
The definition given in \cite{GLM1} used the elements ${s}_m^G$ and made sense only after tensoring by the field $\Q$.
\end{remark}

Applying the natural homomorphism from the Grothendieck ring $K_0(\fgs)$
to the ring $R(G)$ of representations of the group $G$ (see Section \ref{uno})
one gets a reduced version of the zeta function as an element of $1+t\cdot R(G)[[t]]$.
The equivariant Poincar\'e series of filtrations defined in \cite{CDG2} also belongs to the set
$1+t\cdot R(G)[[t]]$. (In fact the latter one even belongs to the set  $1+t\cdot R_1(G)[[t]]$,
where $R_1(G)$ is the subring of $R(G)$ generated by the one-dimensional representations.
However, for an abelian group $G$, where the notion of the equivariant Poincar\'e series from \cite{CDG2}
really makes sense, these sets coincide.)

The properties of the equivariant Lefschetz numbers and the example above imply  the following propositions.

\begin{proposition} \label{stz1}
 Let $\varphi:X \to X$ be such that $\varphi(Y)\subset Y$,   $\varphi(X\setminus Y)\subset X\setminus Y$
for a $G$-subset $Y\subset X$. Then
$$
{\widetilde \zeta}_\varphi^G(t)=
{\widetilde \zeta}_{\varphi_{\vert Y}}^G(t)\cdot {\widetilde \zeta}_{\varphi_{\vert {X\setminus Y}}}^G(t)\,.
$$
\end{proposition}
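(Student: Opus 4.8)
The plan is to reduce the claimed multiplicativity to an additivity statement at the level of the coefficients, and then to produce the product via the exponential law of the power structure recalled in Section~\ref{uno}. Concretely, I would prove in turn: (i) the equivariant Lefschetz number ${\widetilde L}^G$ is additive for the decomposition $X=Y\sqcup(X\setminus Y)$; (ii) each ${\widetilde s}_m^G$ is therefore additive as well; and (iii) additivity of the exponents turns the defining product~(\ref{eq2}) into a product of two zeta functions. Steps (ii) and (iii) are formal, so the real content sits in step (i).

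For step (i), fix a conjugacy class $\gH$. Since $Y$ is a $G$-subset and is $\varphi$-invariant by hypothesis, so is $X\setminus Y$, and the fixed-point loci $X^{\gH}$ and $X^{>\gH}$ are $\varphi$-invariant for any $G$-equivariant map (if a point is fixed by $H$, so is its image). Hence the pair $(X^{\gH}/G,\,X^{>\gH}/G)$ decomposes as the disjoint union of the two $\varphi$-invariant pairs $(Y^{\gH}/G,\,Y^{>\gH}/G)$ and $((X\setminus Y)^{\gH}/G,\,(X\setminus Y)^{>\gH}/G)$, where $Y^{\gH}=Y\cap X^{\gH}$, and likewise for the other terms. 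Passing to relative cohomology with compact support (matching the additive convention for $\chi$ fixed in the Remark of Section~\ref{uno}), this disjoint union of pairs gives a $\varphi^{*}$-invariant direct sum decomposition, so the alternating trace is additive:
$$
L(\varphi_{\vert (X^{\gH}/G,\,X^{>\gH}/G)})=L(\varphi_{\vert (Y^{\gH}/G,\,Y^{>\gH}/G)})+L(\varphi_{\vert ((X\setminus Y)^{\gH}/G,\,(X\setminus Y)^{>\gH}/G)}).
$$
Multiplying by $[G/H]$ and summing over $\gH$ in~(\ref{newL}) yields ${\widetilde L}^G(\varphi)={\widetilde L}^G(\varphi_{\vert Y})+{\widetilde L}^G(\varphi_{\vert {X\setminus Y}})$. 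I expect this additivity of the ordinary additive Lefschetz number over an invariant decomposition of pairs — the analogue for ${\widetilde L}^G$ of Theorem~2.1 of \cite{Luck} — to be the only non-formal point.

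For step (ii), I would note that each iterate $\varphi^{i}$ again preserves both $Y$ and $X\setminus Y$, so the additivity of step (i) applies to all the ${\widetilde L}^G(\varphi^{i})$ simultaneously. Möbius inversion of~(\ref{G-lefsc}) writes ${\widetilde s}_m^G(\varphi)=\sum_{i\mid m}\mu(m/i)\,{\widetilde L}^G(\varphi^{i})$ as a $\Z$-linear combination, and linearity transports additivity to ${\widetilde s}_m^G(\varphi)={\widetilde s}_m^G(\varphi_{\vert Y})+{\widetilde s}_m^G(\varphi_{\vert {X\setminus Y}})$. Applying the Proposition that ${\widetilde s}_m^G$ is divisible by $m$ to the maps $\varphi_{\vert Y}$ and $\varphi_{\vert {X\setminus Y}}$ shows that each summand is divisible by $m$ in $K_0(\fgs)$, so both restricted zeta functions are well defined and the exponents ${\widetilde s}_m^G/m$ split additively. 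Finally, for step (iii) I would invoke the exponential identity $B^{a+b}=B^{a}\cdot B^{b}$ valid in the power structure over $K_0(\fgs)$, applied with $B=(1-t^{m})^{-1}\in 1+t\cdot K_0(\fgs)[[t]]$, to each factor of~(\ref{eq2}); taking the product over $m\ge 1$ then gives ${\widetilde \zeta}_\varphi^G(t)={\widetilde \zeta}_{\varphi_{\vert Y}}^G(t)\cdot{\widetilde \zeta}_{\varphi_{\vert {X\setminus Y}}}^G(t)$, as desired.
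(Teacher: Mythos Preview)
Your proposal is correct and follows exactly the line the paper indicates: the paper does not give a formal proof but states in one sentence that Propositions~\ref{stz1} and~\ref{stz2} follow from ``the properties of the equivariant Lefschetz numbers and the example above,'' and your steps (i)--(iii) are precisely the natural unpacking of that remark (additivity of ${\widetilde L}^G$ under a $\varphi$- and $G$-invariant splitting, transported to the ${\widetilde s}_m^G$ by M\"obius inversion, and converted to multiplicativity via the power structure).
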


\begin{proposition} \label{stz2}
 Let $X=X^{(\gH)}$ for a conjucacy class $\gH\in \mbox{consub}(G)$ and let   $\varphi:X \to X$
be a $G$-equivariant map. Then
$$
{\widetilde \zeta}_\varphi^G(t)=\left({\zeta}_{\varphi_{\vert X/G}}(t)\right)^{[G/H]}
$$
for a representative $H$ of the class $\gH$.
\end{proposition}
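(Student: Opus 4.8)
The plan is to reduce everything to the single stratum and then transport the resulting scalar identity through the recursions and the power structure. Since $X=X^{(\gH)}$, every point of $X$ has isotropy group conjugate to $H$, so in the defining sum (\ref{newL}) for ${\widetilde L}^G$ only the term indexed by $\gH$ survives: for $\gH'\ne\gH$ the stratum $X^{(\gH')}=X^{\gH'}\setminus X^{>\gH'}$ is empty, while for $\gH'=\gH$ it is all of $X$. First I would invoke additivity of the (relative) Lefschetz number to rewrite $L(\varphi_{\vert(X^{\gH'}/G,X^{>\gH'}/G)})$ as $L(\varphi_{\vert X^{(\gH')}/G})$, so that the whole sum collapses to ${\widetilde L}^G(\varphi)=L(\varphi_{\vert X/G})[G/H]$. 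The identical argument applies to every iterate, since $\varphi^m$ is again $G$-equivariant, $X$ is unchanged, and $(\varphi_{\vert X/G})^m=(\varphi^m)_{\vert X/G}$; hence ${\widetilde L}^G(\varphi^m)=L\big((\varphi_{\vert X/G})^m\big)\,[G/H]$ for all $m\ge1$.

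Next I would pass through the recursions. Writing $\psi:=\varphi_{\vert X/G}$ and comparing (\ref{G-lefsc}) with the non-equivariant recursion (\ref{lefsc}) for $\psi$, the identity $\sum_{i\mid m}{\widetilde s}_i^G(\varphi)=\big(\sum_{i\mid m}s_i(\psi)\big)[G/H]$ holds for every $m$. Since this recursion is linear and triangular with respect to divisibility (M\"obius inversion), it follows that ${\widetilde s}_m^G(\varphi)=s_m(\psi)\,[G/H]$ for all $m$. In particular the integer $s_m(\psi)$ is divisible by $m$ (as noted after (\ref{zetalef})), so $k_m:=s_m(\psi)/m\in\Z$ and ${\widetilde s}_m^G(\varphi)/m=k_m[G/H]$, which is also consistent with the preceding divisibility proposition.

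Finally I would assemble the zeta function using the power structure on $K_0(\fgs)$. Substituting into (\ref{eq2}) gives ${\widetilde\zeta}_\varphi^G(t)=\prod_{m\ge1}(1-t^m)^{-k_m[G/H]}$. Using the exponent rule $a(t)^{xy}=\big(a(t)^x\big)^y$ with $x=-k_m\in\Z$ (where the power structure restricts to the ordinary power on integers) and $y=[G/H]$, each factor becomes $\big((1-t^m)^{-k_m}\big)^{[G/H]}$; then multiplicativity $\big(a(t)b(t)\big)^{c}=a(t)^c\,b(t)^c$ lets me pull $[G/H]$ outside the product, yielding $\big(\prod_{m\ge1}(1-t^m)^{-s_m(\psi)/m}\big)^{[G/H]}=\big(\zeta_{\varphi_{\vert X/G}}(t)\big)^{[G/H]}$, which is exactly the claimed formula.

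The individual steps are short; the main point to get right is the bookkeeping with the power structure in the last paragraph, namely that the integer exponent $-k_m$ may be detached from the $G$-set exponent $[G/H]$ via $a^{xy}=(a^x)^y$ and that this detachment commutes with forming the infinite product (legitimate since these are formal power series and the identities hold coefficientwise). A secondary point to verify is the additivity used in the first paragraph to replace the relative Lefschetz number of the pair by the Lefschetz number on the open quotient $X^{(\gH)}/G$; this is precisely where one uses that $\chi$ is the compactly-supported, additive Euler characteristic emphasized in the Remark of Section~\ref{uno}.
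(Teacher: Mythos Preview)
Your proof is correct and is precisely the argument the paper leaves implicit: the paper does not give a separate proof of this proposition but simply states that it (together with Proposition~\ref{stz1}) follows from ``the properties of the equivariant Lefschetz numbers and the example above.'' Your computation that only the $\gH'=\gH$ term survives in (\ref{newL}), the M\"obius inversion giving ${\widetilde s}_m^G(\varphi)=s_m(\psi)[G/H]$, and the use of the power-structure identities to pull $[G/H]$ outside the product are exactly the intended details.
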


Assume that a $G$-equivariant map $\varphi:X \to X$ preserves the subspaces 
$X^{(\gH)}$, i.e. $\varphi(X^{(\gH)})\subset X^{(\gH)}$. In this case 
$\varphi_{\vert X/G}$ preserves the subspaces $X^{(\gH)}/G$. 
Propositions (\ref{stz1}) and  (\ref{stz2}) imply the following statement.

\begin{proposition} \label{stz3}
Let $\varphi:X \to X$ be a $G$-equivariant map that preserves the subspaces 
$X^{(\gH)}$ for any conjucacy class $\gH\in \mbox{consub}(G)$. Then
$$
{\widetilde \zeta}_\varphi^G(t)=
\prod_{\gH\in \mbox{consub}(G)}
\left({\zeta}_{\varphi_{\vert X^{(\gH)}/G}}(t)\right)^{[G/H]}.
$$
\end{proposition}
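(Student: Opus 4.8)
The plan is to reduce the statement to the two preceding propositions by decomposing $X$ into its isotropy strata and handling each stratum separately. The starting point is the finite partition
$$
X=\coprod_{\gH\in \mbox{consub}(G)}X^{(\gH)}
$$
into the locally closed $G$-invariant subsets $X^{(\gH)}$. By hypothesis $\varphi(X^{(\gH)})\subset X^{(\gH)}$ for every $\gH$, so each stratum—and hence every union of strata—is simultaneously a $G$-subset and $\varphi$-invariant.

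First I would bring this partition into a form to which Proposition~\ref{stz1} applies. Fixing an arbitrary ordering of the conjugacy classes, I would peel the strata off one at a time: for a sub-collection $S\subset \mbox{consub}(G)$ set $Y=\bigcup_{\gH\in S}X^{(\gH)}$. Then both $\varphi(Y)\subset Y$ and $\varphi(X\setminus Y)\subset X\setminus Y$ hold, the latter because $X\setminus Y$ is itself a union of strata; thus the hypotheses of Proposition~\ref{stz1} are met at each stage. Iterating the multiplicativity of Proposition~\ref{stz1} over the finite collection of strata yields
$$
{\widetilde \zeta}_\varphi^G(t)=\prod_{\gH\in \mbox{consub}(G)}{\widetilde \zeta}_{\varphi_{\vert X^{(\gH)}}}^G(t)\,.
$$

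Next I would evaluate each factor by Proposition~\ref{stz2}. Inside the stratum $X^{(\gH)}$ every point has isotropy subgroup in $\gH$, and since the isotropy subgroup is intrinsic to the point and the $G$-action, we have $\bigl(X^{(\gH)}\bigr)^{(\gH)}=X^{(\gH)}$. Hence the restriction $\varphi_{\vert X^{(\gH)}}$ is a $G$-equivariant self-map of a space all of whose points lie in the single isotropy class $\gH$, so Proposition~\ref{stz2} (applied with $X^{(\gH)}$ in the role of $X$) gives
$$
{\widetilde \zeta}_{\varphi_{\vert X^{(\gH)}}}^G(t)=\Bigl({\zeta}_{\varphi_{\vert X^{(\gH)}/G}}(t)\Bigr)^{[G/H]}
$$
for a representative $H$ of $\gH$. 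Substituting these into the product above delivers the asserted formula.

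The only step requiring genuine care is the bookkeeping in the iterated use of Proposition~\ref{stz1}: one must check that at each stage of the peeling both the part split off and its complement are $G$-invariant and $\varphi$-invariant, which is precisely what $\varphi(X^{(\gH)})\subset X^{(\gH)}$ guarantees for arbitrary unions of strata. A secondary technical point is that each $X^{(\gH)}$, and each union of them, is again a ``relatively good'' $G$-space—this holds because the strata are constructible subsets of $X$—so that all the zeta functions occurring are well defined. Once these points are granted, the proof is a purely formal combination of the two propositions.
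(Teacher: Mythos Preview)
Your proof is correct and is exactly the argument the paper has in mind: the authors simply state that Propositions~\ref{stz1} and~\ref{stz2} imply the result, and you have spelled out precisely that reduction via the isotropy stratification. There is nothing to add.
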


This statement can be applied to the monodromy transformation below.

\section{The A'Campo type formula for the equivariant monodromy zeta function}\label{Sec_acampo}
Let $(V,0)$ be a germ of a purely $n$-dimensional complex analytic variety with an action of the group $G$
and let $f:(V,0)\to (\C,0)$ be the germ of a $G$-invariant analytic function such that
$\mbox{\rm Sing} \,V\subset f^{-1}\{0\}$.
Let $M_f$ be the Milnor fibre of the germ $f$ at the origin:
$M_f=\{x\in V: f(x)=\varepsilon, \Vert x \Vert\leq \delta\,\}$ with
$0<\vert \varepsilon \vert \ll \delta $ small enough (for this definition we assume $(V,0)$ to be embedded
in the affine space $(\C^N,0)$). The group $G$ acts on the Milnor fibre $M_f$.
The classical monodromy transformation $h=h_f:M_f\to M_f$
is a $G$-equivariant map corresponding to the loop $\varepsilon(\tau)=
\varepsilon\cdot\exp{(2\pi i \tau)}$ in $\C$ around the origin.

Let $\pi:(X,\DD)\to (V,0)$ be a $G$-equivariant resolution of the germ $f$, i.e. a proper
$G$-equivariant map from an $n$-dimensional $G$-manifold $X$ to $V$ such that $\pi$ is an isomorphism outside
the zero level set  $f^{-1}\{0\}$ of the function $f$ and, in a neighbourhood of any point $p$ of the total transform
$E_0:=\pi^{-1}(f^{-1}\{0\})$ of the zero-level set $f^{-1}\{0\}$ of the function $f$, there exists a local
system of coordinates $z_1,\ldots,z_n$
(centred at the point $p$) such that $f\circ \pi(z_1,\ldots,z_n)=z_1^{m_1}z_2^{m_2}\cdots z_n^{m_n}$,
with non-negative integers $m_i$, $i=1,\ldots,n$. (This implies that the total transform $E_0$ of
the zero level set $f^{-1}\{0\}$ is a normal crossing divisor on $X$.)
Moreover, we assume that, for each point
$p\in E_0$, the irreducible components of $E_0$ at the point $p$ are invariant with respect to the isotropy
group $G_p$ of the point $p.$
(This can be achieved, if necessary, by additional blow-ups  of the intersections of the components of the
exceptional divisor.)

Let $S_m$, $m\geq 1$, be the set of points $p$ of the exceptional divisor $\DD:=\pi^{-1}\{0\}$ such that,
in a neighbourhood of the point $p$, one has $f\circ \pi(z_1,\ldots,z_n)=z_1^{m}$.
For $p\in S_m$, let $G_p$ be the isotropy group of the point $p$: $G_p=\{g\in G:\,gp=p\}$.
The group $G_p$ acts on the smooth germ $(X,p)$ preserving the exceptional divisor $\DD$ locally
given by $z_1=0$. This implies that, in a neighbourhood of the point $p$, one can suppose $G_p$ to act by
linear transformations in coordinates $z_1,\ldots,z_n$ preserving ``the normal slice''
$z_2=\ldots=z_n=0$. This way one gets a linear representation of the group $G_p$ on this
normal slice.
Let ${\widehat G}_p$ be the kernel of this representation. One can see that $G_p/{\widehat G}_p$
is a cyclic group the order of which divides $m$.

 Let $S_{m,H,{\widehat H}}$, $({\widehat H}\subseteq H)$, be the set of points $p\in S_m$
such that the pair $(H,{\widehat H})$ is conjugate to the pair $ (G_p,{\widehat G}_p)$, i.e.
for an element $g\in G$ one has  $G_p=gHg^{-1}$ and ${\widehat G}_p=g{\widehat H}g^{-1}$.

\begin{theorem*}\label{acampo}
\begin{eqnarray}\label{eq-acampo}
{\widetilde \zeta}^G_f(t)&=&\prod_{m\geq 1,(\gH,{\widehat{\gH}})}(1-t^{m\frac{\vert \widehat H \vert}{\vert  H \vert}})
^{-\frac{|{ H}|\chi(S_{m,H,{\widehat H}})}{|G|}[G/{\widehat H}]} \nonumber \\
&=&\prod_{m\geq 1,(\gH,{\widehat{\gH}})}(1-t^{m\frac{\vert \widehat H \vert}{\vert  H \vert}})^
{-{\chi(S_{m,H,{\widehat H}}/G)}[G/{\widehat H}]}\,,\label{eq4}
\end{eqnarray}
where the product is over all conjugacy classes $(\gH,{\widehat{\gH}})$ of pairs of subgroups of the group $G$
(such that $H/ {\widehat H}$ is cyclic),
the pair $(H,{\widehat H})$ is a representative of the conjugacy class $(\gH,{\widehat{\gH}})$.
\end{theorem*}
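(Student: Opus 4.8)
The plan is to reduce the equivariant statement to the ordinary A'Campo formula on the quotients of the isotropy strata by means of Proposition~\ref{stz3}, and then to analyse the local sheet structure of the Milnor fibre over the exceptional divisor. First I would observe that the monodromy $h$ is a $G$-homeomorphism of $M_f$, so $G_{h(x)}=G_x$ for every $x$ and $h$ preserves each isotropy stratum $M_f^{(\gH)}$. Hence Proposition~\ref{stz3} applies and gives
\[
\widetilde\zeta^G_f(t)=\prod_{\gH}\bigl(\zeta_{h|_{M_f^{(\gH)}/G}}(t)\bigr)^{[G/H]},
\]
so it suffices to compute each ordinary zeta function on a quotient stratum.

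The next step is to identify which isotropy class a point over the divisor carries. Near $p\in S_m$ with $(G_p,\widehat G_p)=(H,\widehat H)$ one has $f\circ\pi=z_1^m$, and $H$ acts on the normal slice through the character $\chi\colon H\to\C^*$ with kernel $\widehat H$ and image the cyclic group $\mu_d$, $d=|H|/|\widehat H|$, where $d\mid m$. A point $q$ of the local Milnor fibre $\{z_1^m=\varepsilon\}$ has $z_1(q)\neq0$, so an element of $H$ fixes $q$ only if it lies in $\ker\chi=\widehat H$; conversely $\widehat H$ fixes $p$ and acts trivially on $z_1$, hence fixes $q$. Thus the points of $M_f$ over $S_{m,H,\widehat H}$ have isotropy $\widehat H$, the sheets over $S_{m,H,\widehat H}$ lie in $M_f^{(\widehat\gH)}$, and $M_f^{(\widehat\gH)}$ is the union of these sheets over all $m$ and $\gH$.

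Then I would compute each ordinary zeta function by A'Campo-type localisation of the Lefschetz numbers $L(h^k|_{M_f^{(\widehat\gH)}/G})$ to the exceptional divisor. By additivity of the Lefschetz numbers the computation factors over the strata $S_{m,H,\widehat H}$, and as in A'Campo's argument the strata lying on two or more components of $E_0$ contribute $0$ (their local Milnor fibres carry a free circle action, so the relevant Euler characteristics vanish). Over the single-component stratum $S_{m,H,\widehat H}$ the $m$ sheets $z_1=\varepsilon^{1/m}\zeta_m^j$ carry two commuting cyclic actions: the monodromy rotates the index by $z_1\mapsto\zeta_m z_1$, i.e. $j\mapsto j+1$, while $H/\widehat H\cong\mu_d$ rotates by $j\mapsto j+a\tfrac md$. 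Passing to the quotient by $G$ collapses the $\mu_d$-orbits, so over each point of $S_{m,H,\widehat H}/G$ the fibre becomes $m/d$ points and the induced monodromy is the cyclic rotation of order exactly $m/d$. Hence $L(h^k|_{(\mathrm{sheets})/G})=\tfrac md\,\chi(S_{m,H,\widehat H}/G)$ when $\tfrac md\mid k$ and $0$ otherwise, and solving the recursion~(\ref{lefsc}) gives the local factor $(1-t^{m/d})^{-\chi(S_{m,H,\widehat H}/G)}$ with $m/d=m|\widehat H|/|H|$.

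Finally I would assemble the factors: substituting into the product from Proposition~\ref{stz3} and using compatibility of the power structure with products yields the second displayed form, and the first follows from $\chi(S_{m,H,\widehat H}/G)=\tfrac{|H|}{|G|}\chi(S_{m,H,\widehat H})$, valid because $G$ acts on $S_{m,H,\widehat H}$ with all orbits of type $G/H$. The main obstacle I expect is the local analysis over $S_{m,H,\widehat H}$: one must check equivariantly that the geometric monodromy can be localised near $\DD$ in A'Campo's fashion, that the monodromy and the normal-slice action of $H/\widehat H$ indeed commute and combine into a rotation of order exactly $m/d$ on the $G$-quotient, and that the multi-component strata still contribute trivially after passing to isotropy strata and quotients. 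The identification of the isotropy type as $\widehat H$ and the bookkeeping of the normal-slice character also require care.
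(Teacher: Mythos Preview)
Your proposal is correct and follows essentially the same route as the paper: equivariant localisation of the monodromy to the exceptional divisor, the circle-fibration argument to kill the multi-component contributions, and the local sheet analysis over each $S_{m,H,\widehat H}$. The only organisational difference is that you invoke Proposition~\ref{stz3} (stratify $M_f$ by isotropy type first, then run ordinary A'Campo on each quotient), whereas the paper applies Proposition~\ref{stz1} to decompose over the strata of $\DD$ and then Proposition~\ref{stz2} on each piece; since Proposition~\ref{stz3} is itself derived from \ref{stz1} and \ref{stz2}, this is a reordering rather than a different argument, and your explicit identification of the isotropy of the Milnor-fibre sheets as $\widehat H$ is exactly what the paper leaves implicit when it appeals to Proposition~\ref{stz2}.
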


\begin{proof}
Since the resolution $\pi$ is an isomorphism ouside of $f^{-1}(0)$,
the Milnor fibre $M_f$ can be identified with its preimage $\pi^{-1}(M_f)\subset X$.
Just like in the non-equivariant case (see \cite{AC},\cite{clemens}) one can construct
a $G$-equivariant retraction of a neighbourhood of the total transform $E_0$ of the zero
level set $f^{-1}\{0\}$ to $E_0$ itself such that, outside of a neighbourhood in $E_0$
of all intersections of at least two irreducible components of $E_0$, i.e. where in
local coordinates  $f\circ \pi(z_1,\ldots,z_n)=z_1^{m},\, m\geq 1$, the retraction sends
a point $(z_1,z_2,\ldots,z_n)$ to $(0,z_2,\ldots,z_n)$.
Moreover, one can construct a monodromy transformation $h=h_f$ which commutes with the retraction
and, in a neighbourhood of a point where $f\circ \pi(z_1,\ldots,z_n)=z_1^{m}$, it sends
a point
$(z_1,z_2,\ldots,z_n)\in M_f$ to $(\exp(\frac{2\pi i}{m})z_1,z_2,\ldots,z_n)\in M_f$, $M_f$ being the Milnor fibre.

From Statement~\ref{stz1}, it follows that the equivariant zeta function
${\widetilde \zeta}_f^G(t):={\widetilde \zeta}_{h_f}^G(t)$ is equal to the product
of the equivariant zeta functions for the monodromy transformation on the preimages of
the strata $S_{m,H,{\widehat H}}$ and on  neighbourhoods of the
intersections of the irreducible components of $E_0$.

One can show that the equivariant zeta function
of the monodromy transformation $h$ on a neighbourhood of an intersection of the
irreducible components of $E_0$ is equal to one.
This follows from the fact that the Milnor fibre $M_f$ in a neighbourhood of such an intersection
can be fibred by circles and the monodromy transformation can be supposed to preserve these fibration.

To compute the equivariant zeta function  of the monodromy transformation on the preimage of the stratum
$S_{m,H,{\widehat H}}$ it is useful to have in mind the following property of the usual zeta funtion
of a transformation.
Let $\varphi:Y\to Y$ be such that $\varphi^i$ has no fixed points for $0<i<k$, $\varphi^k=id$. Then
${\widetilde \zeta}_{\varphi}(t)=(1-t^k)^{-\chi(Y)/k}$.
Now the  mentioned above equivariant zeta function
is given by Statement~\ref{stz2} and is equal to
\begin{equation*}
(1-t^{m\frac{\vert \widehat H \vert}{\vert  H \vert}})
^{-\frac{|{ H}|\chi(S_{m,H,{\widehat H}})}{|G|}[G/{\widehat H}]}
=(1-t^{m\frac{\vert \widehat H \vert}{\vert  H \vert}})^
{-{\chi(S_{m,H,{\widehat H}}/G)}[G/{\widehat H}]}\,.
\end{equation*}
This implies  (\ref{eq4}).
\end{proof}

Equation (\ref{eq-acampo}) with Proposition  (\ref{polynomials})
implies the followinf statement.
 
\begin{proposition}
 The equivariant monodromy zeta function ${\widetilde \zeta}_f^G(t)$ is a rational function
with coefficients in the Grothendieck ring $K_0(\fgs)$ of finite $G$-sets.
 \end{proposition}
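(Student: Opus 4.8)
The plan is to read off the claim directly from the A'Campo type formula~(\ref{eq-acampo}) for ${\widetilde \zeta}_f^G(t)$. That formula expresses the equivariant monodromy zeta function as a \emph{finite} product of factors of the form
$$
(1-t^{m\frac{\vert \widehat H \vert}{\vert H \vert}})^{-\chi(S_{m,H,{\widehat H}}/G)\,[G/{\widehat H}]},
$$
one factor for each triple $(m,\gH,{\widehat{\gH}})$, and the product is finite because a resolution $\pi$ introduces only finitely many values of $m$ and only finitely many conjugacy classes of pairs $({\widehat{\gH}},\gH)$ occur (the exceptional divisor $\DD$ has finitely many irreducible components). So it suffices to show that each such factor is a rational function with coefficients in $K_0(\fgs)$, and that the ring of such rational functions is closed under products.

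First I would observe that each factor has the shape $\bigl((1-t^{a})^{-[G/F]}\bigr)^{c}$ with $a=m\vert\widehat H\vert/\vert H\vert$ a positive integer, $F={\widehat H}$, and $c=\chi(S_{m,H,{\widehat H}}/G)$ an ordinary integer. The key input is Proposition~\ref{polynomials}, which states that $(1-t)^{-[G/F]}$ lies in the localization $K_0(\fgs)[t]_{(\{1-t^m\})}$ of the polynomial ring at all the elements $1-t^m$. Substituting $t\mapsto t^{a}$ into the expression for $(1-t)^{-[G/F]}$ is an admissible operation: it sends each denominator binomial $1-t^{m}$ to $1-t^{am}$, which is again one of the allowed binomials, so $(1-t^{a})^{-[G/F]}$ again lies in the same localization. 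Raising to the ordinary integer power $c$ keeps us inside the localization (for $c\ge 0$ this is just a finite product; for $c<0$ one uses that each $(1-t^{a})^{-[G/F]}$ is a unit in the localization, since its constant term is $1$, so its $\vert c\vert$-th power has an inverse there as well). Hence every factor of~(\ref{eq-acampo}) is an element of $K_0(\fgs)[t]_{(\{1-t^m\})}$.

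I would then conclude by noting that a localization is by construction a ring, so the finite product over all $(m,\gH,{\widehat{\gH}})$ of these factors again lies in $K_0(\fgs)[t]_{(\{1-t^m\})}$. An element of this localization is precisely a rational function whose denominator is a product of binomials $1-t^m$ and whose numerator has coefficients in $K_0(\fgs)$, which is exactly the assertion. I expect the only point requiring a word of care to be the substitution step $t\mapsto t^a$ together with the negative-exponent case: one must make sure that the power structure exponent $[G/{\widehat H}]$ (which is handled by Proposition~\ref{polynomials}) and the \emph{ordinary} integer exponent $c=\chi(S_{m,H,{\widehat H}}/G)$ are treated separately, the former giving an element of the localization and the latter being an honest integer power of that unit. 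Everything else is formal closure properties of the localization, so there is no real obstacle once~(\ref{eq-acampo}) and Proposition~\ref{polynomials} are in hand.
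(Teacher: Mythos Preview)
Your approach is exactly the paper's: the paper's entire proof is the single sentence ``Equation~(\ref{eq-acampo}) with Proposition~\ref{polynomials} implies the following statement,'' and you have simply unpacked what that sentence means. The reduction to a finite product, the substitution $t\mapsto t^a$, and the appeal to Proposition~\ref{polynomials} are all correct.

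There is one genuine slip in your handling of the case $c=\chi(S_{m,H,\widehat H}/G)<0$. You argue that $(1-t^a)^{-[G/F]}$ is a unit in the localization $K_0(\fgs)[t]_{(\{1-t^m\})}$ ``since its constant term is $1$.'' Constant term $1$ guarantees invertibility in the power-series ring $K_0(\fgs)[[t]]$, but \emph{not} in this localization: already for trivial $G$ the element $1+2t$ has constant term $1$ yet is not a unit in $\Z[t]_{(\{1-t^m\})}$, since any product $\prod_i(1-t^{m_i})$ is nonzero at $t=-1/2$. The fix is painless: run the proof of Proposition~\ref{polynomials} for the inverse series $(1-t)^{[G/H]}=\sigma_{G/H}(t)^{-1}$. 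The fixed-point count of Lemma~1 then gives $\prod_m(1-t^m)^{+\ell^{\gF}_m}$ on the left, a genuine polynomial, and inverting the same triangular system shows $(1-t)^{[G/H]}\in K_0(\fgs)[t]\subset K_0(\fgs)[t]_{(\{1-t^m\})}$. Hence $(1-t^a)^{\pm[G/F]}$ both lie in the localization, and raising to an arbitrary integer power $c$ causes no trouble.
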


\begin{examples}
\begin{enumerate}
\item  Assume that the classical monodromy transformation $h=h_f:M_f\to M_f$ is an element of the group $G$.
This happens, in particular, when $f$ is a quasi-homogeneous function and $G$ is its symmetry group.
In this case the action of $h$ on $M_f/G$ is trivial and therefore ${\widetilde \zeta}_f^G(t)=(1-t)^{-\chi^G(M_f)}$.
Thus in this case the equivariant monodromy zeta function is determined by the equivariant Euler characteristic
$\chi^G(M_f)$ of the Milnor fibre $M_f$. This corresponds to the idea that in this situation
the equivariant Euler characteristic  $ \chi^G(M_f)$ can be considered as an equivariant analogue
of the monodromy zeta function: \cite{EG}.

\item Let  $f:(\C^3,0)\to (\C,0)$ be defined by  $f(x,y,z)=x^m+y^m+z^m$.
Consider the natural action of the group ${\mathcal S}_3$ of permutations on three elements on $\C^3$
by permutations of the cooordinates. The function $f$ is ${\mathcal S}_3$-invariant.
Blowing-up the origin, one gets a resolution of the function $f$.
Let us assume that $m=6k$. (In the other cases some of the strata $S_{m,H,{\widehat H}}$
below can be empty. These cases can be treated in the same way.)
One has the following  strata $S_{m,H,{\widehat H}}$:
\newline a) $S_{m,{\mathcal S}_3,{\mathcal S}_3}$ consists of one point $P=(1:1:1)\in \C\P^2=\pi^{-1}(0)$.
\newline b) $S_{m,\Z_2,\Z_2}$ consists of three lines $\{x=y\}$, $\{x=z\}$ and $\{y=z\}$ (passing trough $P$)
without their intersections with the strict transform  of the surface $\{f=0\}$, i.e. with the curve
$C=\{x^m+y^m+z^m=0\} \subset \C\P^2$. One has $\chi(S_{m,\Z_2,\Z_2}/{\mathcal S}_3)=1-6k.$
\newline c) $S_{m,\Z_2,\langle e \rangle}$ consists of three points $(1:-1:0)$, $(1:0:-1)$ and $(0:1:-1)$.
(For $6|m$ these points do not lie on the curve $C$.) One has $\chi(S_{m,\Z_2,\langle e \rangle}/{\mathcal S}_3)=1$.
\newline d) $S_{m,\Z_3,\langle e \rangle}$ consists of two points $(1:\sigma:\sigma^2)$ and $(1:\sigma^2:\sigma)$,
where $\sigma=\exp(2\pi i/3).$ (For $6|m$ these points do not lie on  $C$.) One has
$\chi(S_{m,\Z_3,\langle e \rangle}/{\mathcal S}_3)=1$.
\newline e) $S_{m,\langle e \rangle,\langle e \rangle}$ is the complement in $\C\P^2$ to the curve $C$
and to all the strata above. One has $\chi(S_{m,\langle e \rangle,\langle e \rangle}/{\mathcal S}_3)=6k^2-1$.

 Now A'Campo formula  (\ref{eq4}) gives
 \begin{eqnarray*}
{\widetilde \zeta}^{{\mathcal S}_3}_f(t)&=&(1-t^{6k})^{-1}\cdot (1-t^{6k})^{(6k-1)[{\mathcal S}_3/\Z_2]}
\cdot (1-t^{3k})^{-[{\mathcal S}_3/\langle e \rangle]}
\cdot \\
& \ & (1-t^{2k})^{-[{\mathcal S}_3/\langle e \rangle]}
\cdot (1-t^{6k})^{(1-6k^2)[{\mathcal S}_3/\langle e \rangle]}\,.
    \end{eqnarray*}
\end{enumerate}

\end{examples}

\section{On orbifold versions of the monodromy zeta functions}\label{Sec_0rb}
For a $G$-variety $X$, its \emph{orbifold Euler characteristic} 
$\chi^{orb}(X, G)\in \Z$  is defined, e.g., 
in \cite{AS} or \cite{HH}. There is a natural homomorphism of abelian groups $\Phi:K_0(\fgs)\to  \Z$ which
sends the equivariant Euler characteristic $\chi^G(X)\in K_0(\fgs)$ to 
the orbifold Euler characteristic $\chi^{orb}(X, G)$. The homomorphism $\Phi$ sends 
the generator $[G/H]$ of  $K_0(\fgs)$ to 
$\chi^{orb}(G/H,G)$. For an abelian $G$, $\Phi([G/H])=\vert H \vert$ and $\Phi$ 
is a ring homomorphism, but this is  not the case in general. 

The Lefschetz number is a sort of generalization of the Euler characteristic: 
the  Euler characteristic is the  Lefschetz number of the identity map.
Therefore an equivariant version of the  Lefschetz number leads to the natural orbifold 
versions of it: the images of the  Lefschetz number by the homomorphism $\Phi$.
The two versions $L^G(\varphi)$ and ${\widetilde L}^G(\varphi)$ of the
equivariant Lefschetz number (see (\ref{oldL}) and (\ref{newL}))
give two versions 
$$L^{orb}(\varphi)=\Phi( L^G(\varphi))
\,\,\mbox{ and  }\,\,{\widetilde L}^{orb}(\varphi)=\Phi( {\widetilde L}^G(\varphi))$$
of orbifold Lefschetz numbers. 

The usual definition of the zeta function of a transformation (e.g. equations 
(\ref{lefsc}), (\ref{zetalef}), (\ref{G-lefsc}) and (\ref{eq2})) 
gives two orbifold versions of the zeta function 
of a $G$-equivariant transformation $\varphi:X\to X$:
\begin{equation}\label{zeta-orb}
{\zeta}_\varphi^{orb}(t)=
\prod_{m\geq 1} (1-t^m)^{-{{s}_m^{{orb}}}/{m}},
\,\,\mbox{ and  }\,\,
{\widetilde \zeta}_\varphi^{orb}(t)=
\prod_{m\geq 1} (1-t^m)^{-{{\widetilde s}_m^{{orb}}}/{m}},
\end{equation}
where $ L^{orb}(\varphi^m)=\sum_{i|m}s^{orb}_i(\varphi)$
and  $ {\widetilde L}^{orb}(\varphi^m)=\sum_{i|m}{\widetilde s}^{orb}_i(\varphi)$.

The exponents $-{{\widetilde s}_m^{{orb}}}/{m}$ are integers and therefore 
the orbifold monodromy zeta function ${\widetilde \zeta}_\varphi^{orb}(t)$ 
is a rational function in $t$. The exponents $-{{s}_m^{{orb}}}/{m}$ are 
in general rational numbers. 
 
For intance, for $f$ from Example 2 in Section~4 one has
\begin{equation*}
 {\widetilde \zeta}_f^{orb}(t)
=(1-t^{6k})^{-1+2(6k-1)+(1-6k^2)}
\cdot (1-t^{3k})^{-1}
\cdot  (1-t^{2k})^{-1}.
\end{equation*}
This follows from the fact that, for $G={\mathcal S}_3$ and for a subgroup $H$ of ${\mathcal S}_3$, 
$\chi^{orb}({\mathcal S}_3/H,{\mathcal S}_3)=\vert H \vert$.

\end{document}